\newtheorem{thm}{Theorem}[section]
\newtheorem{theorem}[thm]{Theorem}
\newtheorem{lemma}[thm]{Lemma}
\newtheorem{proposition}[thm]{Proposition}
\newtheorem{definition}[thm]{Definition}
\newcommand{\beq}{\begin{equation}}
\newcommand{\eeq}{\end{equation}}
\newcommand{\beqa}{\begin{eqnarray}}
\newcommand{\eeqa}{\end{eqnarray}}
\newcommand{\beqas}{\begin{eqnarray*}}
\newcommand{\eeqas}{\end{eqnarray*}}
\newcommand{\bi}{\begin{itemize}}
\newcommand{\ei}{\end{itemize}}
\newcommand{\nn}{\nonumber}
\title{Introducing the method of ellipcenters, a new first order technique for unconstrained optimization}
\date{}
\begin{document}

\maketitle 

\begin{center}
\begin{tabular}{ccc}
\begin{tabular}{c}
Roger Behling\\
Department of Mathematics, UFSC\\
Blumenau, SC, Brazil\\
{\tt rogerbehling@gmail.com}
\end{tabular}
&
&
\begin{tabular}{c}
Ramyro Corr\^ea Aquines\\
School of Applied Mathematics, FGV\\
Praia de Botafogo, Rio de Janeiro, Brazil\\
{\tt ramyrocorrea@gmail.com}
\end{tabular}\\
&&\\
\begin{tabular}{c}
Eduarda Ferreira Zanatta\\
Department of Mathematics, UFSC\\
Blumenau, SC, Brazil\\
{\tt eduardazanatta6@gmail.com}
\end{tabular}
&
&
\begin{tabular}{c}
Vincent Guigues\\
School of Applied Mathematics, FGV\\
Praia de Botafogo, Rio de Janeiro, Brazil\\
{\tt vincent.guigues@fgv.br}
\end{tabular}
\end{tabular}
\end{center}

\par {\textbf{Abstract.}} In this paper, we introduce the Method of Ellipcenters (ME) for unconstrained minimization. At the cost of two gradients per iteration and a line search, we compute the next iterate by setting it as the center of an elliptical interpolation. The idea behind the ellipse built in each step is to emulate the original level curve of the objective function constrained to a suitable two-dimensional affine space, which is determined by the current iterate and two appropriate  gradient vectors. We present the method for general unconstrained minimization and carry out a convergence analysis for the case where the objective function is quadratic. In this context, ME enjoys linear convergence with the rate being at least as good as the linear rate of the steepest descent (gradient) method with optimal step. In our experiments, however, ME was much faster than the gradient method with optimal step size. Moreover, ME seems highly competitive in comparison to several well established algorithms, including Nesterov's accelerated gradient, Barzilai-Borwein, and conjugate gradient. The efficiency in terms of both time and number of iterations is stressed even more for ill-conditioned problems. A theoretical feature that might be a reason for this is that ME coincides with Newton for quadratic programs in two-dimensional Euclidean spaces, solving them in one single step. 
In our numerical tests, convergence in one iteration only was also observed
for much larger problem sizes.

\section{Introduction}

The aim of this paper is to present and study the Method of Ellipcenters (ME), a novel scheme for minimizing a differentiable function $f: \mathbb{R}^n \rightarrow \mathbb{R}$. ME iterates by moving from a current iterate $x^k\in\mathbb{R}^n$ to $x^{k+1}$, a point that is the center of a special ellipse $E_k$. 

Although intending to minimize $f$, we are satisfied when finding a stationary point, that is, an $x$ so that $\nabla f(x)=0$.  That said, let us assume that we are at a non-stationary iterate $x^k$. In this case, an auxiliary point 
$y^k:=x^k-t_k\nabla f(x^k)$ is computed, where $t_k>0$ is such that $f(x^k)=f(y^k)$. Having found $y^k$, we compute the gradient $\nabla f(y^k)$ and if it is non-zero and a multiple of $\nabla f(x^k)$, ME returns $x^{k+1}$ as the next iterate, a minimizer of $f$ along the semi-line starting at $x^k$ and passing through $y^k$. Otherwise, we define the two-dimensional affine space
\begin{equation}\label{defpik}
\Pi_k:=\Big\{x\in \mathbb{R}^n :x=x^k+\mbox{span}\{\nabla f(x^k),\nabla f(y^k)\}\Big\}
\end{equation}
and build the elliptical curve $E_k$ having the following three properties:
\begin{description}
   \item[ME1] $E_k$ is an ellipse contained in $\Pi_k$;
   \item[ME2] $E_k$ is orthogonal to $\nabla f(x^k)$ at $x^k$;
   \item[ME3] $E_k$ is orthogonal to $\nabla f(y^k)$ at $y^k$. 
\end{description}

Once $E_k$ is built, ME moves from $x^k$ to $x^{k+1}$, the center of the ellipse $E_k$.

In the case of quadratic functions $f$, we take as ellipse satisfying
ME1, ME2, and ME3 the set $E_k:=\Pi_k \bigcap L_k$, where $\Pi_k$ is as above and $L_k$ is the level set 
   \begin{equation}\label{deflk}
L_k:=\left\{x \in \mathbb{R}^n : f(x) = f(x^k)\right\}.
   \end{equation}
Since $L_k$ is an ellipsoid for a quadratic objective $f$ and $\Pi_k$ is two-dimensional and affine, we have that $E_k$ is an ellipse. The center $x^{k+1}$ of $E_k$ is given as the minimizer of $f$ in $\Pi_k$, and it is obtained based on a suitable simple $2\times 2$ linear system. 

\noindent\begin{minipage}{\linewidth}
 \centering
\begin{tikzpicture}[scale=1.0]

  \draw[thick] (0,0) ellipse (4 and 1.8);  
  \node at (3.5,1.2) {$\mathbb{E}_k$};
  \node at (4.5,-1.5) {$\Pi_k$};

  \coordinate (xk) at (-2.8,-1.3);    
  \coordinate (yk) at (-1.3, 1.7);    
  \coordinate (xk1) at (0,0);

  \fill (xk) circle (2pt) node[below left] {$x^k$};
  \fill (yk) circle (2pt) node[above right] {$y^k$};
  \fill (xk1) circle (2pt) node[below right] {$x^{k+1}$};

  \coordinate (gradXk) at ($(xk)!0.7!(yk)$);  
  \coordinate (gradYk) at ($(yk) + (0.2,-1.7)$); 
  
  \draw[->, thick, >=latex] (xk) -- (gradXk) node[midway, left] {\scriptsize$-\nabla f(x^k)$};
  \draw[->, thick, >=latex] (yk) -- (gradYk) node[midway, right] {\scriptsize$-\nabla f(y^k)$};

  \draw[dashed] (xk) -- (yk);

  \coordinate (tangentXk) at ($(xk)!0.3cm!90:(gradXk)$);
  \pic [draw, angle radius=0.3cm] {right angle = tangentXk--xk--gradXk};
  
  \coordinate (tangentYk) at ($(yk)!0.3cm!90:(gradYk)$);
  \pic [draw, angle radius=0.3cm] {right angle = tangentYk--yk--gradYk};

  \node at (0,-2.2) {$f(x^k) = f(y^k)$};

\end{tikzpicture}
 
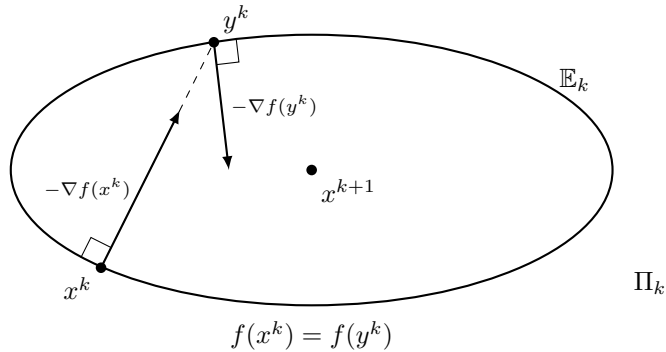
\captionof{figure}{Illustration of an iteration of ME.}\label{Grafico}
\end{minipage}\\

The idea here in principle is to pick a better point than the one given by the classical gradient method, also known as the steepest descent method or Cauchy method \cite{Cauchy1847}. More than that, the intention of $E_k$ is to emulate the level curve of $f$ restricted to $\Pi_k$ and encage a minimizer, which is slightly related to the recent
Circumcentered-Reflection-Method (CRM) proposed and studied in
for instance  \cite{Behling2024b,crm1}. Figure \ref{Grafico} illustrates an iteration of ME.

It is easy to see that even if $f$ has a global minimizer, the ME iteration might not be well-defined, there could simply not exist a point $y^k$ as required in the procedure. If, however, $f$ is strongly convex or coercive  an ellipse as described above can be constructed, and right away from ME1, ME2, and ME3 one would get that $x^{k+1}-x^k$ is a descent direction for $f$ at $x^k$. Nevertheless, there could be more ellipses $E_k$ fulfilling ME1-ME3 and, among them, a specific one would have to be chosen. Thus, in order to facilitate the understanding of ME and to clearly present its philosophy, we will assume along the remaining of the paper that $f$ is a strongly convex quadratic function. More general cases will be treated in future work. The strongly convex quadratic case alone is interesting and elegant. More than being able to build a convenient ellipse enjoying the properties ME1, ME2 and ME3, we get closed formulas for $y^k$ and can easily compute $x^{k+1}$ analytically when $f$ is strongly  convex and quadratic.

Accelerating the gradient method with first-order tools has long been a popular subject of research in the field of Continuous Optimization, see for instance \cite{grimmer2023optimal,lan2015bundle,mishchenko2020adaptive,nesterov2015universal,renegar2022simple} and \cite{zhou2024adabb}. Perhaps one of the most famous of these methods is given in the paper \cite{nesterov1983} by Y. Nesterov (fine tuning
of this algorithm is discussed in \cite{gonzagakaras}). He embedded inertia in the gradient method and was able to derive an iteration with best possible complexity for a first-order method, \textit{i.e.}, a method using first derivatives only. Another first-order algorithm that enjoys optimal complexity in theory is the Ellipsoid method \cite{Khachiyan1979}. Nevertheless, in practice, the Ellipsoid method, which is different from what we are proposing here, is not attractive.

Our paper is organized as follows. All sections deal with $f$ being a strongly convex quadratic function. Bearing this is mind, in Section \ref{sec:ellipcenter} we 
introduce ME algorithm, show that 
properties ME1, ME2 and ME3 are satisfied, and  show how the center of the ellipse $E_k$ can be computed. Along with that, we prove  that ME coincides with the Newton method in $\mathbb{R}^2$, tracking the minimizer of the quadratic in one single step. In Section \ref{secconv}, we derive linear convergence of ME and prove that this linear rate is at least as good as the one known for the gradient method with optimal step size. In Section \ref{sec:num}, we present encouraging numerical experiments, which show that ME is specially powerful for minimizing ill-conditioned quadratics even of large scale (with corresponding matrix $A$ up to size 1 million by 1 million). Indeed, ME competes with strong competitors such as Barzilai-Borwein, conjugate gradient, Nesterov's accelerated gradient and the original gradient method with optimal step (both in terms of CPU time and number of iterations). Final remarks and a to do list for future work are presented in Section \ref{secconc}.

In what follows, for vectors $x,y \in \mathbb{R}^n$, we will use the notation
$\langle x,y \rangle$ for the scalar product $x^T y$ with corresponding norm
$\|\cdot\|$. For a definite positive
matrix $A$, we will also denote by
$\langle x, y \rangle_A$ the scalar
product $x^T A y$ with corresponding norm $\|\cdot\|_A$.

\section{Method of ellipcenters for minimizing a quadratic function}\label{sec:ellipcenter}

\subsection{Algorithm}

In this section, we present and study properties of
ME applied to quadratic objective functions.
A general strongly convex quadratic can be described as a function $f: \mathbb{R}^n \rightarrow \mathbb{R}$ of the form
\begin{equation}\label{formfqd}
f(w) = \frac{1}{2} w^T A w - b^T w+c,
\end{equation}
where \( A \in \mathbb{R}^{n \times n} \) is a symmetric positive definite  matrix, \( b \in \mathbb{R}^n \), and $c$ is a scalar.

Of course $f$ has a unique minimizer $x^*= A^{-1} b\in\mathbb{R}^n$, which is also the unique point where its derivative $\nabla f(w) = A w - b$ vanishes. In other words, minimizing $f$ is equivalent to solving the linear system $Aw=b$. 

At iteration $k$, given $x^k$, if $\nabla f(x^k) \neq 0$ (otherwise we have found an optimal solution)
ME computes new iterate $x^{k+1}$ as follows. 
It computes a step $t_k$ (given below in \eqref{steptk})
and $y^{k}=x^k-t_k \nabla f(x^k)$ such that
$y^k$ and $x^k$ are in the same level set of $f$, i.e.,
$f(y^k)=f(x^k)$. 
If $\nabla f(y^k)$ and $\nabla f(x^k)$ are linearly
dependent, the next iterate is 
$x^{k+1}=(1/2)(x^k+y^k)$. Otherwise, we compute 
$\alpha_k$ and $\beta_k$ (given below in \eqref{formabk})
which are such that the next iterate
$x^{k+1}$ is given by 
$x^{k+1}=x^k + \alpha_k \nabla f(x^k) + \beta_k \nabla f(y^k)$. ME algorithm is given below.

\noindent\rule[0.5ex]{1\columnwidth}{1pt}
	Method of ellipcenters for quadratic objective function\\
    \noindent\rule[0.5ex]{1\columnwidth}{1pt}
    {\bf Inputs:} Definite positive matrix $A \in \mathbb{R}^{n \times n}$, vector $b \in  \mathbb{R}^{n}$, initial point $x^1 \in \mathbb{R}^n$, $k=1$.\\
    
\noindent {\textbf{Step 1.}} If $\nabla f(x^k)=0$ stop and return $x^k$. Otherwise, go to Step 2.\\

\noindent {\textbf{Step 2.}} Compute 
\begin{equation}\label{steptk}
t_k=\frac{2 \nabla f(x^k)^T \nabla f(x^k)}{\nabla f(x^k)^T A \nabla f(x^k)}
\end{equation}
and
\begin{equation}\label{defyk}
y^k = x^k - t_k \nabla f(x^k)
\end{equation}
where $\nabla f(x^k)=Ax^k - b$.

\noindent {\textbf{Step 3.}} If $\nabla f(y^k)$ and $\nabla f(x^k)$
are linearly independent compute
\begin{equation}\label{formabk}
\left\{
\begin{aligned}
\Delta_k &= \langle \nabla f(y^k),A \nabla f(y^k) \rangle \langle \nabla f(x^k),A \nabla f(x^k) \rangle - \langle \nabla f(x^k),A \nabla f(y^k) \rangle^2,\\ 
\alpha_k &=\displaystyle \frac{ \langle \nabla f(y^k),\nabla f(x^k) \rangle \langle \nabla f(x^k),A \nabla f(y^k) \rangle - \langle \nabla f(x^k),\nabla f(x^k)\rangle \langle \nabla f(y^k),A \nabla f(y^k)\rangle}{ \Delta_k  },\\
\beta_k &= \displaystyle \frac{ -\langle \nabla f(y^k),\nabla f(x^k) \rangle \langle \nabla f(x^k),A \nabla f(x^k) \rangle + \langle \nabla f(x^k),\nabla f(x^k)\rangle \langle \nabla f(y^k),A \nabla f(x^k)\rangle}{ \Delta_k  },
\end{aligned}
\right.
\end{equation}
$\hspace*{1.5cm}$where $\nabla f(x^k)=Ax^k - b$, $\nabla f(y^k)=Ay^k - b$, and the next iterate
\begin{equation}\label{formxkp1}
x^{k+1} = x^k + \alpha_k \nabla f(x^k) + \beta_k \nabla f(y^k);
\end{equation}
$\hspace*{1.5cm}$else if $\nabla f(y^k)$ and $\nabla f(x^k)$
are linearly dependent compute
\begin{equation}
x^{k+1} = \frac{1}{2}\left(x^k + y^k\right).
\end{equation}
$\hspace*{1.5cm}$end if\\
$\hspace*{1.5cm}$Do $k \leftarrow k+1$ and go to Step 1.\\
\noindent\rule[0.5ex]{1\columnwidth}{1pt}

In the next section, we prove properties of ME.
In particular, we show in Lemma \ref{lemxyk} that
$t_k$ allows us  to find $y^k$ such that 
$f(y^k)=f(x^k)$. We also show in Lemma
\ref{propemellipse} that $x^{k+1}$
given by \eqref{formxkp1}
is both the minimizer
of $f$ in affine space $\Pi_k$ and the center of the ellipse $E_k=\Pi_k \bigcap L_k$.

\subsection{Properties of the algorithm}

In this section, we prove the two lemmas
announced at the end of the previous section which provide basic and fundamental properties of ME.

\begin{lemma}\label{lemxyk} Vector $y^k$ given by \eqref{defyk} satisfies
$f(x^k)=f(y^k)$ and therefore $y^k$ and 
$x^k$ are in the same level set of $f$.
\end{lemma}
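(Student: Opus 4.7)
The plan is to verify the equality $f(y^k) = f(x^k)$ by direct substitution into the quadratic form \eqref{formfqd} and simplification. Since the step $t_k$ in \eqref{steptk} is exactly the root of a quadratic equation in $t$ that expresses $f(x^k - t \nabla f(x^k)) - f(x^k) = 0$, the computation should close cleanly with no case analysis required.

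Concretely, I would set $g_k := \nabla f(x^k) = A x^k - b$ for brevity and write $y^k = x^k - t_k g_k$. Expanding using $f(w) = \tfrac12 w^T A w - b^T w + c$, the cross term in $(x^k - t_k g_k)^T A (x^k - t_k g_k)$ together with the linear term $-b^T(x^k - t_k g_k)$ contributes precisely $-t_k g_k^T(Ax^k - b) = -t_k \|g_k\|^2$, while the quadratic term contributes $\tfrac{t_k^2}{2} g_k^T A g_k$. The constant $c$ cancels, as does $\tfrac12 (x^k)^T A x^k - b^T x^k$. This yields the identity
\begin{equation*}
f(y^k) - f(x^k) = t_k\Big(-\langle g_k, g_k\rangle + \tfrac{t_k}{2}\langle g_k, A g_k\rangle\Big).
\end{equation*}

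Substituting $t_k = \frac{2 \langle g_k, g_k\rangle}{\langle g_k, A g_k\rangle}$ from \eqref{steptk} into the bracketed expression makes it vanish, so $f(y^k) = f(x^k)$, which is exactly the claim that $y^k$ lies in the level set $L_k$ of \eqref{deflk}. I do not expect any real obstacle: the definition of $t_k$ is precisely engineered so that the quadratic equation $f(x^k - t g_k) = f(x^k)$ has $t_k$ as its nontrivial root (the other being $t=0$, corresponding to $x^k$ itself), and positive definiteness of $A$ together with $g_k \neq 0$ (ensured by Step 1 of the algorithm) guarantees that $\langle g_k, A g_k\rangle > 0$, so $t_k$ is well-defined and strictly positive.
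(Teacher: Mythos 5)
Your proposal is correct and follows essentially the same route as the paper: both expand $f(x^k - t_k\nabla f(x^k))$ (the paper via the exact second-order Taylor expansion, you via direct substitution into the quadratic form, which is the same computation), arrive at the identity $f(y^k)-f(x^k)=t_k\bigl(-\|\nabla f(x^k)\|^2+\tfrac{t_k}{2}\nabla f(x^k)^TA\nabla f(x^k)\bigr)$, and observe that the choice of $t_k$ in \eqref{steptk} annihilates the bracket. Your added remarks on well-definedness and positivity of $t_k$ are correct but not needed for the statement.
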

\begin{proof} Using the second order Taylor expansion for $f$ (which is exact since $f$ is quadratic), we have 
\begin{align}
f(y^k) &= f(x^k)-t_k \nabla f(x^k)^T \nabla f(x^k) + \frac{1}{2}(t^k)^2 \nabla f(x^k)^T A \nabla f(x^k) \\& =  f(x^k)+  t_k\Big(\underbrace{-\|\nabla f(x^k)\|^2 + \frac{1}{2}t_k \nabla f(x^k)^T A \nabla f(x^k)}_{0} \Big)\\&=f(x^k).
\end{align}
\end{proof}

\begin{lemma}\label{propemellipse} 
Consider iteration $k$ of ME and for that iteration, assume that  $\nabla f(y^k)$ and $\nabla f(x^k)$
are linearly independent.
Then the  ellipse
$$
E_k=\Pi_k \bigcap L_k
$$
where $\Pi_k$ is defined in \eqref{defpik} 
and $L_k$ is defined by \eqref{deflk} satisfies ME1, ME2, ME3 and $x^{k+1}$
given by \eqref{formxkp1} is both the minimizer
of $f$ in $\Pi_k$  and the center of $E_k$.
\end{lemma}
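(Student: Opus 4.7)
The plan is to handle the three ellipse properties and the minimizer/center identification in order. Property ME1 is immediate from the very definition $E_k=\Pi_k\cap L_k\subset \Pi_k$.

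For ME2 and ME3, I would use that, since $f$ is a strongly convex quadratic, $L_k=\{x:f(x)=f(x^k)\}$ is a smooth ellipsoid whose outward normal at any point $z$ is $\nabla f(z)$. Both $x^k$ and $y^k$ lie in $L_k$ by Lemma \ref{lemxyk}, and at $x^k$ the tangent hyperplane to $L_k$ is $T_{x^k}L_k=\{v:\langle v,\nabla f(x^k)\rangle=0\}$. Since $\Pi_k$ passes through $x^k$ with linear part containing $\nabla f(x^k)$, the tangent line to the curve $E_k=\Pi_k\cap L_k$ at $x^k$ is the intersection of $T_{x^k}L_k$ with the linear part of $\Pi_k$, and by construction this line is contained in the orthogonal complement of $\nabla f(x^k)$. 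That is ME2; the identical argument at $y^k$ gives ME3.

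For the minimizer, I would parametrize $\Pi_k$ in coordinates $(\alpha,\beta)\in\mathbb{R}^2$ by $x(\alpha,\beta)=x^k+\alpha\nabla f(x^k)+\beta\nabla f(y^k)$, and study $g(\alpha,\beta):=f(x(\alpha,\beta))$. A direct substitution in \eqref{formfqd} shows that $g$ is a quadratic in $(\alpha,\beta)$ with Hessian
$$
H=\begin{pmatrix}\langle \nabla f(x^k),A\nabla f(x^k)\rangle & \langle \nabla f(x^k),A\nabla f(y^k)\rangle\\[2pt] \langle \nabla f(x^k),A\nabla f(y^k)\rangle & \langle \nabla f(y^k),A\nabla f(y^k)\rangle\end{pmatrix},
$$
which is precisely the Gram matrix of $\nabla f(x^k),\nabla f(y^k)$ in the $A$-inner product. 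By Cauchy-Schwarz in $\langle\cdot,\cdot\rangle_A$, this matrix is positive definite iff these two vectors are $A$-linearly independent, i.e.\ linearly independent (since $A$ is nonsingular), which is the hypothesis; thus $\det H=\Delta_k>0$ and $g$ is strongly convex. Setting $\nabla g(\alpha,\beta)=0$, using $\nabla f(x(\alpha,\beta))=\nabla f(x^k)+\alpha A\nabla f(x^k)+\beta A\nabla f(y^k)$, gives the $2\times 2$ linear system
$$
H\begin{pmatrix}\alpha\\ \beta\end{pmatrix}=-\begin{pmatrix}\langle \nabla f(x^k),\nabla f(x^k)\rangle\\ \langle \nabla f(y^k),\nabla f(x^k)\rangle\end{pmatrix},
$$
and Cramer's rule, together with the symmetry $\langle \nabla f(y^k),A\nabla f(x^k)\rangle=\langle \nabla f(x^k),A\nabla f(y^k)\rangle$, returns exactly the formulas \eqref{formabk} for $(\alpha_k,\beta_k)$. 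Hence $x^{k+1}$ from \eqref{formxkp1} is the unique minimizer of $f$ on $\Pi_k$.

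It remains to show that this minimizer is the center of $E_k$. Since $g$ is a strongly convex quadratic on $\mathbb{R}^2$, its level sets are concentric ellipses all centered at its unique minimizer $(\alpha_k,\beta_k)$; the ellipse $E_k$ is, in these coordinates, the level set $\{g=f(x^k)\}$, hence it is centered at $x^{k+1}$. The main bookkeeping obstacle is the verification that the Cramer determinants reproduce \eqref{formabk} exactly and that $\Delta_k>0$; both reduce to the Gram-matrix observation above, which I consider the cleanest way to avoid a brute-force calculation.
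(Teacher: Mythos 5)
Your proposal is correct and follows essentially the same route as the paper's proof: orthogonality of the gradients to the level set $L_k$ gives ME2--ME3, and parametrizing $\Pi_k$ by $(\alpha,\beta)$ reduces both the center of $E_k$ and the minimizer of $f$ on $\Pi_k$ to the same $2\times 2$ Gram-matrix system in the $A$-inner product, solved by Cramer's rule to recover \eqref{formabk}. Your only additions are welcome bits of explicitness the paper leaves implicit, namely that positive definiteness of the Hessian follows from Cauchy--Schwarz in $\langle\cdot,\cdot\rangle_A$ and that the level sets of the resulting strongly convex quadratic are concentric ellipses centered at its minimizer, which cleanly identifies the center of $E_k$ with $x^{k+1}$.
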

\begin{proof}
 From elementary properties of derivatives, the gradients $\nabla f(x^k)$, $\nabla f(y^k)$ have to be orthogonal to the level set $L_{k}$ at $x^k$, and $y^k$, respectively. Since the curve $E_k$ is, by definition, contained in $L_{k}$, we get that
 $ME_2$ and $ME_3$ are satisfied. 
 Now, $L_k$ is an ellipsoid and $\Pi_k$ is a two-dimensional affine set. Therefore, $E_k:=\Pi_k \bigcap L_k$ is an ellipse contained in $\Pi_k$
 and $ME_1$ is satisfied. 

 The description of this ellipse $E_k$ can be done by writing down $f(x^k + \alpha \nabla f(x^k) + \beta \nabla f(y^k))$ using two real parameters $\alpha$ and $\beta$. Points of form $x^k+\alpha \nabla f(x^k) + \beta \nabla f(y^k)$ of the level
 set $L_k$ can be written as the set
 of points
 $x^k+\alpha \nabla f(x^k) + \beta \nabla f(y^k)$
 where $(\alpha,\beta)$
 satisfies
 \begin{equation}\label{palphbeta}
 p(\alpha,\beta)=f(x^k + \alpha \nabla f(x^k) + \beta \nabla f(y^k)) -f(x^k)=0.
\end{equation}
with $p:\mathbb{R}^2\to\mathbb{R}$. 
We obtain an ellipse whose center
is $x^k+\alpha^* \nabla f(x^k) + \beta^* \nabla f(y^k)$
 where $(\alpha^*,\beta^*)$
 satisfies
$\nabla p(\alpha^*,\beta^*)=0$.
This equation can be written
\begin{align}
\langle \nabla f(x^k) , \nabla f(x^k + \alpha^* \nabla f(x^k) +  \beta^* \nabla f(y^k))  \rangle = \langle \nabla f(x^k) , A(x^k + \alpha^* \nabla f(x^k) +  \beta^* \nabla f(y^k))-b  \rangle  = 0,\label{systabeta1}\\
\langle \nabla f(y^k) , \nabla f(x^k + \alpha^* \nabla f(x^k) +  \beta^* \nabla f(y^k))  \rangle = \langle \nabla f(y^k) , A(x^k + \alpha^* \nabla f(x^k) +  \beta^* \nabla f(y^k))-b  \rangle  = 0.\label{systabeta2}
\end{align}
The system above is of form
$$
M^k \left[
\begin{array}{c}
\alpha^*\\
\beta^*
\end{array}
\right] = q^k 
$$
where 
\begin{equation}\label{formMk}
M^k = \left[ 
\begin{array}{cc}
\langle \nabla f(x^k), \nabla f(x^k) \rangle_A & \langle \nabla f(x^k), \nabla f(y^k) \rangle_A \\
\langle \nabla f(x^k), \nabla f(y^k) \rangle_A & \langle \nabla f(y^k), \nabla f(y^k) \rangle_A
\end{array}
\right],\;\;\;q^k=\left[ 
\begin{array}{cc}
-\|\nabla f(x^k)\|^2 \\
-\langle \nabla f(x^k), \nabla f(y^k) \rangle
\end{array}
\right].
\end{equation}
Matrix $M^k$ is a Gram matrix and since
we are in the situation where
vectors $\nabla f(x^k)$ and 
$\nabla f(y^k)$ are linearly independent, this matrix is invertible 
($A$ is positive definite) and system of equations \eqref{systabeta1}-\eqref{systabeta2}
in 
variables $(\alpha^*,\beta^*)$
has a unique solution.
Solving for $\alpha^*$, $\beta^*$, we 
find
 $\alpha^*=\alpha_k$,
$\beta^*=\beta_k$, which shows that
the center of the ellipse is indeed
$x^k+\alpha_k \nabla f(x^k) + \beta_k \nabla f(y^k)$. From the above computations, it is also clear that
$x^k+\alpha_k \nabla f(x^k) + \beta_k \nabla f(y^k)$ is a minimizer of
$f$ in $\Pi_k$, i.e., is of form
$x^k+\alpha^* \nabla f(x^k) + \beta^* \nabla f(y^k)$ with $(\alpha^*,\beta^*)$
solution of \eqref{systabeta1}-\eqref{systabeta2}.
\end{proof}

\par Observe that the denominator
$\Delta_k$ in the formulas 
giving $\alpha_k$ and
$\beta_k$ is positive ($\Delta_k$ is the determinant of $M^k$ given by \eqref{formMk}).
Indeed, $\Delta_k=\| \nabla f(y^k)\|^2_A
\|\nabla f(x^k)\|^2_A -
\langle \nabla f(x^k), \nabla f(y^k) \rangle_A^2$ and by Cauchy-Schwarz inequality, $\Delta_k \geq 0$
and $\Delta_k = 0$ (there is equality in Cauchy-Schwartz inequality) if and only if
$\nabla f(y^k)$ and $\nabla f(x^k)$
are linearly dependent.
Since when computing $\Delta_k$ in \eqref{formabk} we have that 
$\nabla f(y^k)$ and $\nabla f(x^k)$
are linearly independent, we indeed have
that $\Delta_k>0$.\\

If $\nabla f(x^k)\neq 0$ and, furthermore, $\nabla f(y^k)$ is a multiple of $\nabla f(x^k)$, then the next iterate $x^{k+1}$ of ME  is $x^{k+1}=1/2(x^k+y^k)=x^k-t_k^* \nabla f(x^k)$
where $t_k^*=t_k/2$ is the step taken from $x^k$
by the gradient method with optimal step.
Therefore, in this situation, as intended, 
the next iterate $x^{k+1}$ is a 
minimizer of $f$ along the semi-line starting at $x^k$ in the direction of $y^k$. This means that
when $\nabla f(x^k)$ and $\nabla f(y^k)$ are linearly dependent, the ME iteration coincides with an iteration of the gradient method with optimal step. This is consistent with the philosophy of ME, as one could get an ellipse with the properties  ME2 and ME3 by simply taking any circle centered in $x_{k+1}$ passing through $x^k$ and $y^k$. The common situation, however is the one considered in Lemma \ref{propemellipse}, where $\nabla f(x^k)\neq 0$ and $\nabla f(y^k)$ are linearly independent, in which case, the next iterate is given by \eqref{formxkp1}.\\

In the next section, we prove the linear convergence of ME applied to quadratic objective functions.

\section{Convergence analysis} \label{secconv}

To prove the linear convergence of
ME, we start recalling Kantorovich
inequality, see \cite{Newman1959}.

\begin{proposition}[Kantorovich inequality] Let $A$
be a symmetric definite positive matrix with eigenvalues
$0<\lambda_1 \leq \lambda_2 \leq \ldots \leq \lambda_n$.
Then for any vector $y \neq 0$, we have
\begin{equation}
\frac{(y^T y)^2}{(y^T A y)  (y^T A^{-1} y)} \geq 
\frac{4 \lambda_1 \lambda_n}{(\lambda_1+\lambda_n)^2}.
\end{equation}
\end{proposition}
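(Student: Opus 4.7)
The plan is to reduce the inequality to a one-variable optimization via the spectral decomposition of $A$ and then exploit the convexity of $t\mapsto 1/t$. Let $A=Q\,\text{diag}(\lambda_1,\ldots,\lambda_n)Q^T$ be the spectral decomposition and write $z = Q^T y$. Then $y^T y = \sum_i z_i^2$, $y^T A y = \sum_i \lambda_i z_i^2$, and $y^T A^{-1} y = \sum_i z_i^2/\lambda_i$. Setting $p_i := z_i^2/\sum_j z_j^2$, we obtain weights $p_i \ge 0$ with $\sum_i p_i = 1$, and the inequality becomes equivalent to
\begin{equation*}
\Bigl(\sum_i p_i \lambda_i\Bigr)\Bigl(\sum_i p_i/\lambda_i\Bigr) \;\le\; \frac{(\lambda_1+\lambda_n)^2}{4\,\lambda_1\lambda_n}.
\end{equation*}

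Next, I would use the key convexity observation: since $t\mapsto 1/t$ is convex on $(0,\infty)$, on the interval $[\lambda_1,\lambda_n]$ its graph lies below the chord joining $(\lambda_1,1/\lambda_1)$ and $(\lambda_n,1/\lambda_n)$. That chord is the affine function $L(t) = (\lambda_1+\lambda_n-t)/(\lambda_1\lambda_n)$, so for each eigenvalue $\lambda_i \in [\lambda_1,\lambda_n]$ we have $1/\lambda_i \le L(\lambda_i)$. Averaging with the weights $p_i$ gives
\begin{equation*}
\sum_i \frac{p_i}{\lambda_i} \;\le\; \frac{\lambda_1+\lambda_n - \mu}{\lambda_1\lambda_n}, \qquad \text{where } \mu := \sum_i p_i \lambda_i \in [\lambda_1,\lambda_n].
\end{equation*}

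Finally, multiplying by $\mu$ reduces the problem to maximizing the scalar quadratic $\mu(\lambda_1+\lambda_n-\mu)$ over $\mu\in[\lambda_1,\lambda_n]$. This parabola attains its maximum at $\mu = (\lambda_1+\lambda_n)/2$ with value $(\lambda_1+\lambda_n)^2/4$. Dividing by $\lambda_1\lambda_n$ yields the desired upper bound, and taking reciprocals recovers the stated Kantorovich inequality. There is no serious obstacle here; the only ingredient that requires insight is recognizing that the chord bound on $1/t$ is the right tool to convert the Cauchy–Schwarz-like product into a concave quadratic in a single scalar. The case $y=0$ is excluded by hypothesis, and equality is achieved exactly when $p_i$ is supported on $\{\lambda_1,\lambda_n\}$ with $\mu=(\lambda_1+\lambda_n)/2$.
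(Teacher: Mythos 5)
Your proof is correct. Note that the paper does not actually prove this proposition --- it simply recalls the Kantorovich inequality as a known result with a citation to Newman (1959) --- so there is no in-paper argument to compare against; what you have written is the standard self-contained proof (spectral decomposition, reduction to weights $p_i$, the chord bound on the convex function $t\mapsto 1/t$, and maximization of the resulting concave quadratic in $\mu$), and every step checks out, including the closing remark on the equality case (up to the degenerate situation $\lambda_1=\lambda_n$, where equality always holds).
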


The linear convergence of ME is given in the next theorem.

\begin{theorem}
 Let $x^1\in \mathbb{R}^n$ be any initial point for ME. 
 Let 
 $0<\lambda_1 \leq \lambda_2 \leq \ldots \leq \lambda_n$
 be the eigenvalues of $A$.
 Then, ME generates a sequence $\{x^k\}\subset \mathbb{R}^n$ that converges to $x^*$, the unique minimizer of the quadratic $f$. Moreover, the convergence is linear and we have $\eta \leq 1 - \frac{\lambda_1}{\lambda_n} \in [0,1)$ such that for all $k \geq 1$,
 \begin{equation}\label{firstem}
    f(x^{k+1}) - f(x^*) \leq \eta^k (f(x^1) - f(x^*))
\end{equation}
and
\begin{equation}\label{secem}
    \|x^{k+1} - x^*\|_A \leq \sqrt{\eta}^k\|x^1 - x^*\|_A. 
\end{equation}
\end{theorem}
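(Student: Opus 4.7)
The plan is to exploit the fact, already established in Lemma \ref{propemellipse}, that in the generic branch of ME the new iterate $x^{k+1}$ is the minimizer of $f$ over the entire two-dimensional affine plane $\Pi_k$. Since $\Pi_k$ contains the whole ray $\{x^k - t\nabla f(x^k) : t\in\mathbb{R}\}$, this immediately gives
\begin{equation*}
f(x^{k+1}) \;\leq\; \min_{t\in\mathbb{R}} f(x^k - t\nabla f(x^k)) \;=\; f(\bar x^{k+1}),
\end{equation*}
where $\bar x^{k+1}$ denotes the iterate produced from $x^k$ by the steepest descent method with exact line search. In the degenerate branch ($\nabla f(y^k)$ parallel to $\nabla f(x^k)$), the update $x^{k+1}=(x^k+y^k)/2$ equals $x^k - (t_k/2)\nabla f(x^k)$, and the choice of $t_k$ in \eqref{steptk} makes $t_k/2$ precisely the exact-line-search step, so $x^{k+1}=\bar x^{k+1}$. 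Either way, a single ME step reduces $f$ at least as much as one steepest-descent step with exact line search.

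The second ingredient is the standard Kantorovich-based analysis of that benchmark method. Writing $g_k := \nabla f(x^k) = A(x^k-x^*)$ and using the exact-quadratic identity $f(x)-f(x^*)=\tfrac{1}{2}\|x-x^*\|_A^2 = \tfrac{1}{2}\,g_k^T A^{-1} g_k$, a direct expansion of $\|x^k - t^* g_k - x^*\|_A^2$ with $t^*=g_k^T g_k / (g_k^T A g_k)$ yields
\begin{equation*}
f(\bar x^{k+1}) - f(x^*) \;=\; \left(1 - \frac{(g_k^T g_k)^2}{(g_k^T A g_k)(g_k^T A^{-1} g_k)}\right)(f(x^k)-f(x^*)).
\end{equation*}
Applying the Kantorovich inequality to the bracket produces the sharp contraction factor $\left(\frac{\lambda_n-\lambda_1}{\lambda_n+\lambda_1}\right)^2$. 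To recover the cleaner bound stated in the theorem I would then verify the elementary inequality
\begin{equation*}
\left(\frac{\lambda_n-\lambda_1}{\lambda_n+\lambda_1}\right)^2 \;\leq\; 1 - \frac{\lambda_1}{\lambda_n},
\end{equation*}
which after clearing denominators reduces to $0 \leq 3\lambda_1\lambda_n + \lambda_1^2$. Hence $\eta := 1 - \lambda_1/\lambda_n \in [0,1)$ is a valid one-step contraction constant.

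From here the theorem is immediate. Iterating $f(x^{k+1})-f(x^*)\leq \eta (f(x^k)-f(x^*))$ gives \eqref{firstem} by induction, and inserting the identity $f(x)-f(x^*)=\tfrac{1}{2}\|x-x^*\|_A^2$ on both sides of \eqref{firstem} and taking square roots yields \eqref{secem}. Convergence $x^k\to x^*$ then follows since $\eta<1$ forces $\|x^k-x^*\|_A\to 0$ and $A$ is positive definite.

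I do not anticipate any serious obstacle: the real geometric content of the argument is packaged into Lemma \ref{propemellipse}, whose strength is that $x^{k+1}$ minimizes $f$ over the \emph{plane} $\Pi_k$ and not merely along the gradient ray. The only point requiring a moment of care is covering both branches of ME uniformly, which is handled by the observation above that the linearly-dependent branch is exactly steepest descent with exact line search. The Kantorovich step and the subsequent algebraic simplification to $1-\lambda_1/\lambda_n$ are routine.
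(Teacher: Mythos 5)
Your proposal is correct and follows essentially the same route as the paper: bound $f(x^{k+1})$ by the exact-line-search steepest-descent value using the fact that $x^{k+1}$ minimizes $f$ over $\Pi_k$ (the ray $x^k - t\nabla f(x^k)$ lies in $\Pi_k$), then apply the Kantorovich inequality together with the identity $f(x)-f(x^*)=\tfrac{1}{2}\|x-x^*\|_A^2$ to obtain the factor $\left(\tfrac{\lambda_n-\lambda_1}{\lambda_n+\lambda_1}\right)^2 \leq 1-\tfrac{\lambda_1}{\lambda_n}$. If anything, you are slightly more explicit than the paper in checking that the linearly-dependent branch coincides with the exact-line-search gradient step, so the one-step contraction holds uniformly.
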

\begin{proof} If for some $k$ we have $\nabla f(x^k)=0$ then $x^k=x^*$
is an optimal solution and the algorithm converges in a finite number of iterations.
From now on, we assume that for all $k$ we have $\nabla f(x^k) \neq 0$.
Since $x^{k+1}$ minimizes
$f$ on the affine space $\Pi_k$, we have 
$$
f(x^{k+1}) \leq f(x),\;\;\forall x \in \Pi_k.
$$
In particular, for every $t>0$ we have
$$
f(x^{k+1}) \leq f(x^k-t\nabla f(x^k)),
$$
which implies, for $t=t_k^*=t_k/2$ that
\begin{align} \label{eqpr1}
f(x^{k+1}) -f(x^*) &\leq f(x^k-t_k^* \nabla f(x^k))-f(x^*) \nonumber\\
&= f(x^k)-t_k^* \|\nabla f(x^k)\|^2 + \frac{(t_k^*)^2}{2}\nabla f(x^k)^T A \nabla f(x^k)-f(x^*) \nonumber\\
& = f(x^k) -f(x^*) - \frac{1}{2} \frac{\| \nabla f(x^k) \|^4}{\nabla f(x^k)^T A \nabla f(x^k)} \;\;\mbox{using the expression of }t_k^*.
\end{align}
Using Kantorovich inequality with $y = \nabla f(x^k)\neq 0$, we obtain
\begin{equation}\label{prem2}
\frac{\| \nabla f(x^k) \|^4}{\nabla f(x^k)^T A \nabla f(x^k)} \geq \frac{4\lambda_1 \lambda_n}{(\lambda_1 + \lambda_n)^2} \cdot \nabla f(x^k)^T A^{-1} \nabla f(x^k).
\end{equation}
Since $\nabla f(x^*)=Ax^*-b=0$, we have
that $\nabla f(x^k) = Ax^k -b=A(x^k - x^*)$, which gives
\begin{equation}\label{prem3}
\nabla f(x^k)^T A^{-1} \nabla f(x^k) = (x^k - x^*)^T A (x^k - x^*) = 2(f(x^k) - f(x^*)).
\end{equation}
Combining \eqref{eqpr1},
\eqref{prem2}, and \eqref{prem3}, we have
\begin{align}
f(x^{k+1}) - f(x^*) &\leq f(x^k) - f(x^*) - \frac{4\lambda_1 \lambda_n}{(\lambda_1 + \lambda_n)^2} (f(x^k) - f(x^*)) \nn\\
&= \left( \frac{\lambda_n - \lambda_1}{\lambda_n + \lambda_1} \right)^2 (f(x^k) - f(x^*))\nn \\
&\leq \left(1 - \frac{\lambda_1}{\lambda_n}  \right) (f(x^k) - f(x^*)),
\end{align}
which achieves the proof of \eqref{firstem}.

To show \eqref{secem}, we use the identity
\begin{align}
f(x) - f(x^*) = \frac{1}{2}(x-x^*)^\top A(x-x^*) = \frac{1}{2}\|x-x^*\|_A^2,
\end{align}
which gives 
\[
\|x^{k+1} - x^*\|_A^2 = 2  \big( f(x^{k+1}) - f(x^*) \big)
\;\leq\; 2 \left(1 - \frac{\lambda_1}{\lambda_n}  \right) \big( f(x^k) - f(x^*) \big)
= \left(1 - \frac{\lambda_1}{\lambda_n}  \right) \|x^k - x^*\|_A^2
\]
and achieves the proof of the theorem.
\end{proof}

Finally, we argue that the linear rate of convergence of ME is at least as good (and in practice form our experiments of
Section \ref{sec:num} much better) as the rate of convergence of the gradient method
with optimal step.

\begin{proposition}
Let $x \in \mathbb{R}^n$ with $\nabla f(x) \neq 0$ and consider an
iteration of ME that computes the next iterate $x_{ME}$ from $x$
and an iteration of the gradient method with optimal step size that
computes the next iterate $x_{\mbox{grad}}$ from $x$.
Then $f(x_{ME}) \leq f(x_{\mbox{grad}})$.
\end{proposition}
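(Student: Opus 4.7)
The plan is to exploit the characterization of $x_{ME}$ as a minimizer of $f$ over a set that contains $x_{\mbox{grad}}$. The gradient iterate with optimal step is $x_{\mbox{grad}}=x-t^*\nabla f(x)$, where $t^*=\|\nabla f(x)\|^2/\langle \nabla f(x),A\nabla f(x)\rangle$ minimizes $f$ along the ray $\{x-t\nabla f(x): t \in \mathbb{R}\}$. In particular, $x_{\mbox{grad}}$ lies on this one-dimensional affine line through $x$ in direction $-\nabla f(x)$.

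I would split into the two cases defined by the algorithm. In the generic case where $\nabla f(x)$ and $\nabla f(y)$ are linearly independent (with $y=x-t_x\nabla f(x)$ for $t_x$ given by \eqref{steptk}), Lemma \ref{propemellipse} asserts that $x_{ME}$ is the minimizer of $f$ over the affine space $\Pi=x+\mbox{span}\{\nabla f(x),\nabla f(y)\}$. Since the one-dimensional line $\{x-t\nabla f(x):t\in\mathbb{R}\}$ is contained in $\Pi$, we have $x_{\mbox{grad}}\in\Pi$, so $f(x_{ME})\leq f(x_{\mbox{grad}})$ immediately.

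In the degenerate case where $\nabla f(y)$ is a multiple of $\nabla f(x)$, the algorithm sets $x_{ME}=\tfrac{1}{2}(x+y)=x-\tfrac{t_x}{2}\nabla f(x)$. Plugging the formula \eqref{steptk} for $t_x$ shows $t_x/2=\|\nabla f(x)\|^2/\langle \nabla f(x),A\nabla f(x)\rangle=t^*$, so $x_{ME}=x_{\mbox{grad}}$ and the inequality is an equality. This case is precisely the observation already noted in the discussion following Lemma \ref{propemellipse}.

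I do not anticipate any real obstacle: the argument is essentially a containment of feasible sets for the two implicit optimization problems defining $x_{ME}$ and $x_{\mbox{grad}}$. The only care needed is to verify that the degenerate case of the algorithm is consistent with this picture, which follows from the explicit relation $t_x=2t^*$ coming from \eqref{steptk}.
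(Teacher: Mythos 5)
Your proposal is correct and follows essentially the same route as the paper: the same case split on linear dependence of $\nabla f(x)$ and $\nabla f(y)$, the identification $x_{ME}=x_{\mbox{grad}}$ via $t_x/2=t^*$ in the degenerate case, and in the generic case the observation that $x_{\mbox{grad}}$ lies in the affine plane $\Pi$ over which $x_{ME}$ minimizes $f$ (the paper phrases this by taking $\alpha=-t/2$, $\beta=0$). No gaps.
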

\begin{proof}
Let $$
y=x-t \nabla f(x) \mbox{ for }  t=\frac{2 \nabla f(x)^T \nabla f(x)}{\nabla f(x)^T A \nabla f(x)}.
$$
If $\nabla f(x)$ and $\nabla f(y)$ are linearly dependent,
then $x_{\mbox{grad}}=x_{ME}$ and 
$f(x_{ME}) \leq f(x_{\mbox{grad}})$.
Otherwise, $x_{ME}$ minimizes $f$ in $\Pi=\{x+\alpha \nabla f(x)+\beta \nabla f(y): (\alpha,\beta) \in \mathbb{R}^2\}$ which gives
$$
f(x_{ME}) \leq f(x + \alpha \nabla f(x) + \beta \nabla f(y))
$$
for all $\alpha$, $\beta$. In particular,
$$
f(x_{ME}) \leq f(x - (t/2) \nabla f(x) )=f(x_{\mbox{grad}})
$$
where $t^*=t/2$ is the step taken from $x$ by the gradient method
with optimal step. 
This achieves the proof.
\end{proof}

\section{Numerical experiments}\label{sec:num}

In this section, we consider several instances of the problem of minimizing a quadratic function of form \eqref{formfqd} with $A$ definite positive. 

We start taking $A$ diagonal with
a large condition number equal to 50000, which is a situation where the gradient method with optimal step is known to be slow. Of course, for diagonal matrices, we have an analytic solution which can be immediately computed but all methods are run without having the information that $A$ is diagonal. We generate $A \in \mathbb{R}^{n \times n}$ diagonal as follows:
the first entry in the diagonal is 1, the last entry in the diagonal is 50000 while the remaining
entries in the diagonal are generated sampling integers between 10 and
49900 from the uniform distribution on the corresponding set
of integers. The following (large) values
of the problem size are selected: $n=100000$,
$n=150000$, $n=200000$, $n=250000$, $n=500000$,
$n=700000$, $n=850000$, and $n=1000000$.
We compare ME with the following methods: gradient with optimal step, fast gradient \cite{nesterov1983}, Barzilai-Borwein with long steps \cite{BarzilaiBorwein1988}, 
Barzilai-Borwein with short steps \cite{BarzilaiBorwein1988}, and conjugate gradient \cite{HestenesStiefel1952}  
\cite{FletcherReeves1964}. BFGS method  (see \cite{Fletcherroger87}) was implemented but not tested since it could not run for these large size problems
with the RAM at our disposal. Indeed, the method requires storing huge (for our experiments) 
approximate inverse of Hessian matrices of size $n \times n$. 
Gradient method with Wolfe line search \cite{Wolfe1969} was also implemented but the method
was very slow and the corresponding results are not reported.
Algorithms are stopped at an iteration $k$ when $\|\nabla f(x^k)\| \leq \varepsilon$.
The methods were implemented in Julia and the corresponding code is available on github at {\url{https://github.com/vguigues/Ellipcenter-method}}.
The methods were run on a laptop
with processor Intel Core i7-12700H, 2.3GHz and 16GB of RAM.

Before presenting the results, we now provide, for the sake of completeness, the pseudo-code of all
tested optimization methods.

We start with the pseudo-code of gradient method with optimal step.
At every iteration, given gradient $r=Ax-b$ of the objective computed at the current iterate $x$, the optimal step
is computed as 
$t=\|r\|^2/r^T A r$ and the new iterate $x-tr$ is then computed.\\
{ \small{
\noindent\rule[0.5ex]{1\columnwidth}{1pt}
Gradient method with optimal step to minimize $f$ given by \eqref{formfqd}\\
    \noindent\rule[0.5ex]{1\columnwidth}{1pt}
    {\bf Inputs:} Initial point $x \in \mathbb{R}^n$, $k=0$, $r = Ax - b$.\\
{\textbf{While}} $\|r\|>\varepsilon$\\
$\hspace*{1cm}$ $t = \frac{\|r\|^2}{r^T A r}$\\
$\hspace*{1cm}$ $x \leftarrow x-tr$\\
$\hspace*{1cm}$ $r = Ax - b$\\
$\hspace*{1cm}$ $k \leftarrow k+1$\\
{\textbf{End While}}\\
\noindent\rule[0.5ex]{1\columnwidth}{1pt}
}}
We now provide the pseudo-code
of conjugate gradient which computes
at iteration $k$ a step $t_k$
and a descent direction $d^k$
where descent directions
$d^1,d^2,\ldots,d^k$ are conjugate
with respect to $A$.

{ \small{
\noindent\rule[0.5ex]{1\columnwidth}{1pt}
Conjugate gradient to minimize $f$ given by \eqref{formfqd}\\
    \noindent\rule[0.5ex]{1\columnwidth}{1pt}
    {\bf Inputs:} Initial point $x^0 \in \mathbb{R}^n$, $k=0$.\\
{\textbf{While }}$\nabla f( x^0 ) \neq 0$ {\textbf{do}}\\
\hspace*{1.15cm}{\textbf{If}} $k=0$ {\textbf{then}}\\ 
\hspace*{2.8cm}$d^0=\nabla f (x^0 )$\\ 
\hspace*{1.2cm}{\textbf{else}} 
$$
d^k = \nabla f ( x^k ) + \theta_{k-1} d^{k-1} \mbox{ where }\theta_{k-1} = -\frac{\langle \nabla f( x^{k} ) , A d^{k-1} \rangle}{\langle  d^{k-1} , A d^{k-1} \rangle}. 
$$
\hspace*{1.2cm}{\textbf{End if}}\\
\hspace*{1.2cm}Compute 
$$
t_k = - \frac{\langle d^{k} , \nabla f( x^{k} ) \rangle}{\langle d^{k} , A d^{k} \rangle} \mbox{ and }x^{k+1} = x^k + t_k d^k.
$$
\hspace*{1.2cm}Do $k \leftarrow k + 1$.\\
{\textbf{End While}}\\
\noindent\rule[0.5ex]{1\columnwidth}{1pt}
}}

To compute the step in the first iteration, Barzilai-Borwein uses a line search.
In our implementation, Wolfe search \cite{Wolfe1969} was used. Given
as inputs
a current iterate $x$, a descent direction $d$, a function {\em{blackbox}} (which given
an input $x$ outputs the value and gradient of the objective function at $x$),
an extrapolation parameter $a>1$, and two parameters $0<m_1<m_2<1$, this function
Wolfe\_search outputs a step to take in descent direction $d$ from $x$.

\if{
which will be used to compute the first step in Barzilai-Borwein method.
Wolfe line search updates along iterations an interval $[t_L,t_R]$
which contains satisfactory steps.
The function takes as inputs a given iterate $x$, a descent direction $d$, three parameters: $a>1$ (for extrapolations) and $0<m_1<m_2<1$, and a function called
blackbox which, given as input $x$, returns two outputs, the first being the value of the objective function at $x$ and the second being the derivative of the function at $x$.
The Wolfe line search function 
returns a satisfactory step (allowing a sufficient decrease in the objective and  increase in the
derivative) $t$ satisfying
in particular
$f(x+td)<f(x)$.

{\small{
\noindent\rule[0.5ex]{1\columnwidth}{1pt}
Wolfe\_search\\
    \noindent\rule[0.5ex]{1\columnwidth}{1pt}
    {\bf Inputs:} Initial point $x \in \mathbb{R}^n$,
    descent direction $d$, function {\em{blackbox}}, parameters a, $m_1$, $m_2$\\
$t = 1$\\
$t_{L} = 0$\\
$t_{R} = \infty$\\
bool = false\\
{\bf While } !bool\\
\hspace*{1.2cm}[$f_1$,$g_1$]=blackbox(x+td)\\
\hspace*{1.2cm}[$f_2$,$g_2$]=blackbox(x)\\
\hspace*{1.2cm}qt = $f_1$\\
\hspace*{1.2cm}qpt = $d^T g_1$\\
\hspace*{1.2cm}q0 = $f_2$\\
\hspace*{1.2cm}qp0 = $d^T g_2$\\
\hspace*{1.2cm}if (qt $\leq$ q0 + m1 qp0 t) and (qpt $\geq$ m2 qp0)\\
\hspace*{1.6cm}bool = true\\
\hspace*{1.2cm}elseif (qt $\leq$ q0 + m1 qp0 t) and  (qpt $<$  m2 qp0)\\
\hspace*{1.6cm}$t_L = t$\\
\hspace*{1.2cm}else\\
\hspace*{1.6cm}$t_R = t$\\
\hspace*{1.2cm}end\\
\hspace*{1.2cm}if $t_R = \infty$\\
\hspace*{1.6cm}$t \leftarrow a t$\\
\hspace*{1.2cm}else\\
\hspace*{1.6cm}$t=\frac{t_L+t_R}{2}$\\
\hspace*{1.2cm}end\\\
{\bf{End While}}\\
{\bf Output:} $t$\\
\noindent\rule[0.5ex]{1\columnwidth}{1pt}
}}

}\fi

The following pseudo-code is Barzilai-Borwein method with Wolfe search to compute the step at the first iteration. It uses a boolean short\_steps which is 1 (resp. 0) if short steps (resp. long steps) are used.

{ \small{
\noindent\rule[0.5ex]{1\columnwidth}{1pt}
Barzilai-Borwein algorithm\\
    \noindent\rule[0.5ex]{1\columnwidth}{1pt}
    {\bf Inputs:} Initial point $x \in \mathbb{R}^n$, $k=0$, $d=b-Ax$, short\_steps: boolean which is one if short steps are chosen and 0 otherwise (in which case long steps are chosen).\\
xprev=0\\
dprev=0\\
{\bf While} $\|d\|>\varepsilon$\\
\hspace*{1.2cm}if $k=0$\\
\hspace*{1.6cm}step = Wolfe\_search(x,d,blackbox, a,$m_1$,$m_2$)\\
\hspace*{1.2cm}else\\
\hspace*{1.6cm}$s=x-\mbox{xprev}$\\
\hspace*{1.6cm}$y=\mbox{dprev}-d$\\
\hspace*{1.6cm}if short\_steps //short steps\\\
\hspace*{2cm}$t = \frac{s^T y}{y^T y}$\\
\hspace*{1.6cm}else //long steps\\
\hspace*{2cm}$t = \frac{s^T s}{s^T y}$\\
\hspace*{1.6cm}end\\
\hspace*{1.2cm}end\\
\hspace*{1.2cm}xprev=x\\
\hspace*{1.2cm}dprev= d\\
\hspace*{1.2cm}$x \leftarrow x+t d$\\
\hspace*{1.2cm}d = b - Ax\\
\hspace*{1.2cm}k$\leftarrow k+1$\\
{\bf End While}\\
\noindent\rule[0.5ex]{1\columnwidth}{1pt}
}}

Finally, we give the pseudo-code of
the fast-gradient method from \cite{nesterov1983} to minimize \eqref{formfqd}. It depends on a parameter $L$ which satisfies
$\|\nabla f(y)-\nabla f(x)\|_2 \leq L \|y-x\|$ for all $x,y$. We take 
$L=\|A\|_2$ for this parameter.

{\small{
\noindent\rule[0.5ex]{1\columnwidth}{1pt}
Fast-gradient \cite{nesterov1983} to minimize $f$ given by \eqref{formfqd}\\
    \noindent\rule[0.5ex]{1\columnwidth}{1pt}
    {\bf Inputs:} Initial point $x \in \mathbb{R}^n$, $k=0$, $d=b-Ax$.\\    
$y = x$, $C = 0$, $k=0$, $L=\|A\|_2$\\
{\bf While} $\|Ax-b\|>\varepsilon$\\
\hspace*{1.2cm}$a = \frac{1}{2L} \left( 1+\sqrt{1+4 L C} \right)$\\
\hspace*{1.2cm}$C^{+} = C + a$\\
\hspace*{1.2cm}$\tilde x =\frac{1}{C^+}\Big(C y + a x\Big)$\\  
\hspace*{1.2cm}$y^{+} = \tilde x + \frac{1}{L}(b-A \tilde x)$\\ 
\hspace*{1.2cm}$x=\frac{C^{+}}{a}y^{+} - \frac{C}{a}y$\\  
\hspace*{1.2cm}$y=y^{+}$\\
\hspace*{1.2cm}$C=C^{+}$ \\ 
\hspace*{1.2cm}$k \leftarrow k+1$\\
{\bf End While}\\
\noindent\rule[0.5ex]{1\columnwidth}{1pt}   
}}

\if{
Finally, we provide the pseudo-code of the gradient method with Wolfe line search. The method has the same parameters $a, m_1, m_2$ and function {\em{blackbox}} 
as Wolfe\_search method.

{\small{
\noindent\rule[0.5ex]{1\columnwidth}{1pt}
Gradient method with Wolfe line search\\
    \noindent\rule[0.5ex]{1\columnwidth}{1pt}
{\bf{Inputs:}} initial point $x \in \mathbb{R}^n$, parameters $a>1$, 
$0<m_1<m_2<1$,  function blackbox (same as in Wolfe search), $k=0$, $k_{\max}$.\\
$[f,g]$=blackbox(x)\\
{\bf While } $\|g\|>\varepsilon$ and $k < k_{\max}$\\
\hspace*{1.4cm}t=Wolfe\_search(x,-g,blackbox,a,$m_1$,$m_2$)\\
\hspace*{1.4cm}$x \leftarrow x-tg$\\
\hspace*{1.4cm}$[f,g]$=blackbox(x)\\
\hspace*{1.4cm}k$\leftarrow k+1$\\
{\bf End While}\\
\noindent\rule[0.5ex]{1\columnwidth}{1pt}   
}}

}\fi

\par {\textbf{Results.}} We report in Tables \ref{tableres1}  and \ref{tableres2} for the instances with diagonal matrix $A$ the condition number
$\lambda_{\max}(A)/\lambda_{\min}(A)$, the number of iterations, the CPU time (in seconds), and the optimal value found at termination. All methods find the same approximate optimal value on all problem instances which is an indication that all methods were correctly implemented.
The three (by far) quickest methods are
conjugate gradient, Barzilai-Borwein, and the method of ellipcenters which provide close CPU times. The gradient method with optimal step  and the fast-gradient method were
much slower and required many more iterations.
In terms of number of iterations, the 
conjugate gradient, Barzilai-Borwein, and the method of ellipcenters are also the methods requiring the smallest number of iterations
with the method of ellipcenters ranked second
and providing a number of iterations close to the number of iterations obtained with (the fastest) conjugate gradient. Since conjugate gradient is known to be the most efficient to date for solving linear systems $Ax=b$ with positive
definite $A$, these preliminary results are very encouraging for ME and its extension to more general problems.

We then compared the same methods considering dense
matrices $A \in \mathbb{R}^{n \times n}$ for
$n=40, 50,$ $100, 150, 200, 300$, $400,$ $500, 600, 700,$
and $1000$.
These matrices are generated of the form
$A=v v^T + 10I_n$ where entries of vector $v$
are generated taking a sample from the uniform distribution on
the interval $[0,1]$. 
For these experiments, we limited the
number of iterations of the fast-gradient method to 1000.
The condition number,  CPU time, number of iterations, and optimal value with these methods is reported
in Tables \ref{tableres3}  and \ref{tableres4}.
On the 11 instances of this experiment, the number of iterations of the method of ellipcenters
and conjugate gradient was remarably low and these methods always find an approximate optimal solution in 1 or 2 iterations with the method of ellipcenters requiring less iterations (only one) than conjugate gradient in 5 of the experiments, the same number of iterations
(2 iterations) in 4 of the experiments, and conjugate gradient requiring less iteration than the method of ellipcenters in only 2 of the experiments.
The method of ellipcenters and conjugate gradient are the quickest in
these instances.

{\tiny{
\begin{table}
\centering
\begin{tabular}{|c|c|c|c|c|c|}
 \hline
Method & n &  $\displaystyle \frac{\lambda{\max}(A)}{\lambda_{\min}(A)}$  &   CPU time (s) & Iterations & Optimal value \\
 \hline
 ME &100000&  50000 & 0.54  & 21  &		-1.524e6\\
 \hline
 Gradient optimal step  &100000 &50000 & 5.49& 2929   	& -1.524e6\\
 \hline
Fast gradient (Nesterov)  &100000& 50000 &96.2 & 31803 		& -1.524e6\\
 \hline
Barzilai-Borwein long steps  &100000& 50000 &0.32  & 35 &	-1.524e6 	\\
 \hline
Barzilai-Borwein short steps  &100000& 50000 &0.27 &25 	& -1.524e6\\
 \hline
 Conjugate gradient  &100000&50000 &0.1 & 18 	& -1.524e6\\
 \hline
 \hline
ME &150000& 50000 & 1.77   & 23  &	-2.275e6\\
 \hline
 Gradient optimal step  &150000 &50000 &30.3 & 4761   	& -2.275e6\\
 \hline
Fast gradient (Nesterov)  &150000& 50000 &2747.2& 	32192	&-2.275e6 \\
 \hline
Barzilai-Borwein long steps  &150000&50000 &0.57 &34  &	-2.275e6 	\\
 \hline
Barzilai-Borwein short steps  &150000&50000 & 0.67&29  	& -2.275e6\\
 \hline
 Conjugate gradient  &150000& 50000 &0.32& 18 	& -2.275e6\\
 \hline
 \hline
 ME &200000 & 50000    & 2.22 &25  &	-3.029e6\\
 \hline
 Gradient optimal step  &200000 & 50000 &1454.0 & 5223   	& -3.029e6\\
 \hline
Fast gradient (Nesterov)  &200000& 50000 &784.0 & 33461		& -3.029e6\\
 \hline
Barzilai-Borwein long steps  &200000& 50000 &0.79&  37 &	-3.029e6 	\\
 \hline
Barzilai-Borwein short steps  &200000& 50000 &0.57 & 27 	& -3.029e6\\
 \hline
 Conjugate gradient  &200000 &50000 & 0.40& 19 	& -3.029e6 \\
 \hline 
 \hline
 ME & 250000 &50000 & 2.02 & 23  &	-3.772e6\\
 \hline
 Gradient optimal step  & 250000& 50000 &74.9  &  8701  & -3.772e6\\
 \hline
Fast gradient (Nesterov)  &250000& 50000 &1089.4  & 35919		& -3.772e6\\
 \hline
Barzilai-Borwein long steps  &250000& 50000 &0.81& 34 &	 -3.772e6	\\
 \hline
Barzilai-Borwein short steps  &250000&50000 & 0.63&  27	& -3.772e6\\
 \hline
 Conjugate gradient  &250000&50000 & 0.52& 19  	& -3.772e6\\
 \hline
 \end{tabular}
 \vspace{0.5cm}
\caption{Comparison on several instances with diagonal matrix $A$ (for several values of problem size $n$) of the number of iterations, CPU time (in seconds), and optimal value at termination for ME,
gradient with optimal step,  fast gradient (Nesterov), Barzilai-Borwein with long steps, 
Barzilai-Borwein with short steps, and conjugate gradient methods using $\varepsilon=1$.}\label{tableres1}
\end{table}
}}

{\tiny{
\begin{table}
\centering
\begin{tabular}{|c|c|c|c|c|c|}
 \hline
Method & n & $\displaystyle \frac{\lambda{\max}(A)}{\lambda_{\min}(A)}$  &  CPU time (s) & Iterations & Optimal value \\
\hline
ME & 500000 & 50000 & 2.94   & 21  &	-7.526e6\\
 \hline
 Gradient optimal step  & 500000& 50000 &132.8 &  8181 & -7.526e6\\
 \hline
Fast gradient (Nesterov)  &500000& 50000 & 16444  & 	38358	& -7.526e6\\
 \hline
Barzilai-Borwein long steps  &500000& 50000 & 1.26 &  31 &	 -7.526e6	\\
 \hline
Barzilai-Borwein short steps  &500000& 50000 &1.31 & 26 	& -7.526e6\\
 \hline
 Conjugate gradient  &500000& 50000 &0.72 & 19 	& -7.526e6\\
 \hline
 \hline
ME &700000& 50000 & 4.68   & 23  &	-1.052e7\\
 \hline
 Gradient optimal step  &700000 &50000 &490.3 & 19403   	& -1.052e7\\
 \hline
Fast gradient (Nesterov)  &700000&50000 & 3254.9 & 	43506	& -1.052e7\\
 \hline
Barzilai-Borwein long steps  &700000&50000 &1.77 &33  &	-1.052e7 	\\
 \hline
Barzilai-Borwein short steps  &700000&50000 & 1.82& 31 	& -1.052e7\\
 \hline
 Conjugate gradient &700000 &50000 &0.89& 19   	& -1.052e7\\
 \hline
 \hline
 ME &850000& 50000 &  4.78  & 25  &	-1.277e7\\
 \hline
 Gradient optimal step  &850000 &50000 & 587.9 & 18025    	& -1.277e7\\
 \hline
Fast gradient (Nesterov)  &850000&50000 & 7928,8  & 44256		& -1.277e7\\
 \hline
Barzilai-Borwein long steps  &850000&50000 &2.30 & 35 &-1.277e7\\
 \hline
Barzilai-Borwein short steps  &850000& 50000 &2.21  & 30 	& -1.277e7\\
 \hline
 Conjugate gradient  &850000& 50000 &1.03& 19 	& -1.277e7\\
 \hline
 \hline
 ME &1000000& 50000 & 6.05   & 25  & -1.503e7	\\
 \hline
 Gradient optimal step  & 1000000& 50000 & 575.4  & 15557   	& -1.503e7\\
 \hline
Fast gradient (Nesterov)  &1000000&50000 & 4744.9 & 	44997	& -1.503e7\\
 \hline
Barzilai-Borwein long steps  &1000000& 50000 &2.37& 33 &	 -1.503e7	\\
 \hline
Barzilai-Borwein short steps  &1000000& 50000 &2.36 & 30  	& -1.503e7\\
 \hline
 Conjugate gradient  &1000000&50000 &1.22 & 19 	& -1.503e7\\
 \hline
 \end{tabular}
 \vspace{0.5cm}
\caption{Comparison on several instances with diagonal matrix $A$ of the number of iterations, CPU time, and optimal value at termination for ME,
gradient with optimal step, fast gradient, Barzilai-Borwein with long steps, 
Barzilai-Borwein with short steps, and conjugate gradient methods using $\varepsilon=1$.}\label{tableres2}
\end{table}
}}

{\tiny{
\begin{table}
\centering
\begin{tabular}{|c|c|c|c|c|c|}
 \hline
Method & n & $\displaystyle \frac{\lambda{\max}(A)}{\lambda_{\min}(A)}$ &CPU time (s) & Iterations & Optimal value \\
 \hline
 ME &40&  20   & $<10^{-16}$ &	1&-200.5\\
 \hline
 Gradient optimal step  &40 & 20 & $<10^{-16}$ & 15  	& -200.5\\
 \hline
Fast gradient (Nesterov)  &40& 20  &0.002 	&	466& -200.5\\
 \hline
Barzilai-Borwein long steps  &40&20 & $<10^{-16}$ & 9 &-200.5\\
 \hline
Barzilai-Borwein short steps   &40&20   & $<10^{-16}$&7 	&-200.5 \\
 \hline
 Conjugate gradient  &40& 20& $<10^{-16}$ 	& 2&-200.5\\
 \hline
 \hline
ME &50& 23    & $<10^{-16}$ &1	&-267.0\\
 \hline
 Gradient optimal step  &50 &23  & 0.0009 &15   	&-267.0 \\
 \hline
Fast gradient (Nesterov)  &50&23   & 0.003	&668	& -267.0\\
 \hline
Barzilai-Borwein long steps  &50&23 & $<10^{-16}$ & 9&-267.0\\
 \hline
Barzilai-Borwein short steps   &50&  23 & $<10^{-16}$& 7	& -267.0\\
 \hline
 Conjugate gradient  &50&23 & $<10^{-16}$ 	& 2&-267.0\\
 \hline
 \hline
ME &100&  54   & $<10^{-16}$  &	1&-1454.5\\
 \hline
 Gradient optimal step  &100 & 54 & $<10^{-16}$ & 11  	& -1454.5\\
 \hline
Fast gradient (Nesterov)  &100&  54 & 0.029	&	1000& -1454.5\\
 \hline
Barzilai-Borwein long steps  &100 & 54  &$<10^{-16}$  &7 &-1454.5\\
 \hline
Barzilai-Borwein short steps   &100&54   & $<10^{-16}$& 7	& -1454.5\\
 \hline
 Conjugate gradient  &100&54 & $<10^{-16}$ 	& 2&-1454.5\\
 \hline
 \hline
 ME &150&   68  & $<10^{-16}$ &	1& -2319.5 \\
 \hline
 Gradient optimal step  &150 &68  &  0.0009& 13  	& -2319.5\\
 \hline
Fast gradient (Nesterov)  &150& 68  & 0.07	&	1000& -2319.5\\
 \hline
Barzilai-Borwein long steps  &150&68 & 0.0009 &7 &-2319.5\\
 \hline
Barzilai-Borwein short steps   &150&68   & $<10^{-16}$& 7	& -2319.5\\
 \hline
 Conjugate gradient  &150&68 & $<10^{-16}$ 	& 2&-2319.5\\
 \hline
 \end{tabular}
 \vspace{0.5cm}
\caption{Comparison on several instances  with dense matrix $A$ of the number of iterations, CPU time, and optimal value at termination for  ME,
gradient with optimal step, fast gradient, Barzilai-Borwein with long steps, 
Barzilai-Borwein with short steps, and conjugate gradient methods using $\varepsilon=1$.}\label{tableres3}
\end{table}
}}

{\tiny{
\begin{table}
\centering
\begin{tabular}{|c|c|c|c|c|c|}
 \hline
Method & n & $\displaystyle \frac{\lambda{\max}(A)}{\lambda_{\min}(A)}$ &  CPU time (s) & Iterations & Optimal value \\
 \hline
ME &200&  99& 0.0001  & 2  &-4901.9	\\
 \hline
 Gradient optimal step  & 200 &99 & 0.0009   & 11   	& -4901.9\\
  \hline
Fast gradient (Nesterov)  &200& 99  &	0.09 & 1000	& -4901.9\\
 \hline
Barzilai-Borwein long steps  & 200 &99& 0.0009 & 7 &	-4901.9 	\\
 \hline
Barzilai-Borwein short steps  &200& 99&$<10^{-16}$ & 7 	& -4901.9\\
 \hline
 Conjugate gradient  &200  & 99 &$<10^{-16}$ & 2	& -4901.9\\
\hline
\hline
ME & 300 & 138    & 0.001 &	2&-9534.5\\
 \hline
 Gradient optimal step  & 300& 138 &0.0009& 11  &-9534.5 \\
 \hline
Fast gradient (Nesterov)  &300&  138  &0.27& 	1000	& -9534.5\\
 \hline
Barzilai-Borwein long steps  &300& 138 &0.002& 7  &	-9534.5 	\\
 \hline
Barzilai-Borwein short steps  &300& 138& 0.001& 7 	& -9534.5\\
 \hline
 Conjugate gradient  &300& 138 & 0.001 	&2 &-9534.5\\
 \hline
 \hline
ME &400&  215   & 0.001  &2	&-23098\\
 \hline
 Gradient optimal step  &400 &215 &0.002 & 9  	&-23098 \\
 \hline
Fast gradient (Nesterov)  &400& 215 &0.27 &1000		& -23098\\
 \hline
Barzilai-Borwein long steps  &400& 215&0.003& 7 &	 -23098	\\
 \hline
Barzilai-Borwein short steps  &400&215 &0.004& 7 	& -23098\\
 \hline
 Conjugate gradient &400 &215&    0.001	& 2&-23098\\
 \hline
 \hline
 ME &500  & 241    & 0.002 &	2&-29290\\
 \hline
 Gradient optimal step  &500 & 241& 0.003&  11   	& -29290\\
 \hline
Fast gradient (Nesterov)  &500& 241 &0.38 & 1000		& -29290\\
 \hline
Barzilai-Borwein long steps  &500&241 & 0.007& 7&-29290 \\
 \hline
Barzilai-Borwein short steps  &500&241 &0.007  &7  	& -29290\\
 \hline
 Conjugate gradient  &500&241 &  0.001	& 2&-29290\\
 \hline
 \hline
ME &600  & 309    & 0.003 &2	&-47732\\
 \hline 
Gradient optimal step  &600 & 309 &  0.003 &9  	& -47732\\
 \hline
Fast gradient (Nesterov)  &600& 309  &0.6 &	1000	& -47732\\
 \hline
Barzilai-Borwein long steps  &600&309 & 0.01& 7& -47732\\
 \hline
Barzilai-Borwein short steps  &600&309 & 0.009 & 7 	& -47732\\
 \hline
 Conjugate gradient  &600&309 & 0.002 	& 1 &-47732\\
 \hline
 \hline
 ME &700  & 353    & 0.02 &2	&-62302\\
 \hline 
Gradient optimal step  &700 & 353 & 0.04  & 9 	& -62302\\
 \hline
Fast gradient (Nesterov)  &700&  353 &4.1 &1000		&-62302 \\
 \hline
Barzilai-Borwein long steps  &700& 353& 0.08 & 7 & -62302\\
 \hline
Barzilai-Borwein short steps  &700&353 & 0.07 & 7 	& -62302\\
 \hline
 Conjugate gradient  & 700 & 353 & 0.01 & 1 & -62302\\
 \hline
 \hline
  ME &1000  & 353    & 0.03 &	2&-114741\\
 \hline 
 Gradient optimal step  &1000 & 353 & 0.04 & 9  	&-114741 \\
 \hline
Fast gradient (Nesterov)  &1000&  353 &4.2 &	1000	& -114741\\
 \hline
Barzilai-Borwein long steps  &1000&353 & 0.08& 7& -114741\\
 \hline
Barzilai-Borwein short steps  &1000&353 & 0.24 & 7 	&-114741 \\
 \hline
 Conjugate gradient  &1000&353 & 0.02 	&2 &-114741\\
 \hline
 \end{tabular}
 \vspace{0.5cm}
\caption{Comparison on several instances (for several values of problem size $n$) with dense matrix $A$ of the number of iterations, CPU time (in seconds), and optimal value at termination for ME,
gradient with optimal step, fast gradient (Nesterov), Barzilai-Borwein with long steps, 
Barzilai-Borwein with short steps, and conjugate gradient methods using $\varepsilon=1$.}\label{tableres4}
\end{table}
}}

\section{Concluding remarks}\label{secconc}

In this paper, we introduced the Method of Ellipcenters (ME) to minimize a
differentiable function. We fully derived a framework for ME in the general
case and proposed a detailed algorithm in the particular case of the minimization
of a quadratic objective given by a definite positive matrix.
In this case, we proved the linear convergence of the method and obtained
encouraging numerical results showing that ME is competitive with state of the art
methods for minimizing quadratic functions.

In a future work, we intend to study more in depth the extension of the method
for the minimization of a convex function.
In the case when the function is strongly convex, it is easy to check
that $y^k$ is well defined and uniquely defined, as we show next, in Lemmas
\ref{uniqueness} and \ref{existence}. We need the following definition.
 
\begin{definition}[Line] A line $L$ is a set of form
$
L=\{tx+(1-t)y:t \in \mathbb{R}\}
$
where $x \neq y$ are two points in $\mathbb{R}^n$.
\end{definition}

\begin{lemma}\label{uniqueness} Let $f: \mathbb{R}^n \rightarrow \mathbb{R}$ be a $\mu-$strongly convex function for some norm $\|\cdot\|$. Given a line $L$, there
are at most two different points $x,y \in L$ where $f$ has the same value,
i.e., such that $f(x)=f(y)$.
\end{lemma}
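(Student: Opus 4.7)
The plan is to reduce the statement to the one-dimensional fact that a strictly convex function on $\mathbb{R}$ takes each value at most twice, and then to use strong convexity of $f$ to ensure strict convexity holds along any line. No machinery beyond the definition of strong convexity should be needed.

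First I would argue by contradiction: suppose there exist three distinct points $x_1, x_2, x_3 \in L$ with $f(x_1)=f(x_2)=f(x_3)$. Since these three points are collinear and distinct, after relabeling they are totally ordered along $L$, so one of them (say $x_2$) lies strictly between the other two. Concretely, there exists $\lambda \in (0,1)$ with $x_2 = \lambda x_1 + (1-\lambda)x_3$, and moreover $\|x_1-x_3\| > 0$ since $x_1 \neq x_3$.

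Next I would apply the strong-convexity inequality
\begin{equation*}
f(\lambda x_1 + (1-\lambda) x_3) \;\leq\; \lambda f(x_1) + (1-\lambda) f(x_3) - \frac{\mu}{2}\lambda(1-\lambda)\|x_1-x_3\|^2
\end{equation*}
which, combined with $f(x_1)=f(x_3)$, yields $f(x_2) < f(x_1)$. This contradicts $f(x_1)=f(x_2)$ and closes the argument.

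There is no real obstacle here; the proof is essentially an observation that strong convexity implies strict convexity on every line, which in turn forbids three collinear points of equal value. The only care needed is that the midpoint-style convex combination is strict (i.e., $\lambda \in (0,1)$ and $x_1 \neq x_3$), which is automatic from the three points being distinct and collinear.
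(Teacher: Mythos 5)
Your proof is correct and follows essentially the same route as the paper: assume three distinct collinear points with equal value, write the middle one as a strict convex combination of the outer two, and apply the strong-convexity inequality to force a strict decrease, contradicting equality of values. No further comment is needed.
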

\begin{proof} Assume by contradiction that there are three
different points $x, y, z$ in a line $L$ such that $f(x)=f(y)=f(z)$.
Denote by $\tilde f$ the common value of $f$ at these points
and without loss of generality assume that $y$ belongs
to the segment $[x,z]=\{tx+(1-t)z: 0 \leq t \leq 1\}$, i.e.,
$y=tx+(1-t)z$ for some $0<t<1$. Then
$$
\tilde f=f(y)=f(tx+(1-t)z) \leq 
tf(x)+(1-t)f(z)-\frac{\mu t(1-t)}{2}\|x-z\|^2
=\tilde f -\frac{\mu t(1-t)}{2}\|x-z\|^2
$$
which implies $x=z$ (since $\mu>0$ and
$1>t>0$) and yields the desired contradiction. 
\end{proof}

\begin{lemma}\label{existence}
Let $f: \mathbb{R}^n \rightarrow \mathbb{R}$ be a differentiable and $\mu-$strongly convex function for some norm $\|\cdot\|$. Let $x \in \mathbb{R}^n$ such that
$\nabla f(x) \neq 0$. Then
there is one and only one $t>0$
such that
$f(x-t\nabla f(x))=f(x)$.
\end{lemma}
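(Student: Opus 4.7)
The plan is to reduce the claim to a one-dimensional problem by defining $\varphi:\mathbb{R}\to\mathbb{R}$ by $\varphi(t)=f(x-t\nabla f(x))$ and analyzing the equation $\varphi(t)=\varphi(0)=f(x)$. Existence of a positive solution will follow from the intermediate value theorem applied to $\varphi$, and uniqueness will be immediate from Lemma \ref{uniqueness} applied to the line through $x$ in direction $-\nabla f(x)$.

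For existence, I would first compute $\varphi'(0)=-\langle \nabla f(x),\nabla f(x)\rangle<0$, which uses only that $\nabla f(x)\neq 0$ and shows that $\varphi(\varepsilon)<\varphi(0)$ for all sufficiently small $\varepsilon>0$. Next, I would invoke the standard first-order characterization of $\mu$-strong convexity for differentiable $f$, namely $f(y)\geq f(x)+\langle \nabla f(x),y-x\rangle+\tfrac{\mu}{2}\|y-x\|^2$, evaluated at $y=x-t\nabla f(x)$. This produces a quadratic lower bound on $\varphi(t)$ whose leading coefficient is positive since $\nabla f(x)\neq 0$ (and all norms on $\mathbb{R}^n$ are equivalent), so $\varphi(t)\to +\infty$ as $t\to +\infty$; in particular there exists $T>\varepsilon$ with $\varphi(T)>f(x)$. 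Applying the intermediate value theorem to the continuous function $\varphi$ on $[\varepsilon,T]$ then yields some $t>0$ with $\varphi(t)=f(x)$, proving existence.

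For uniqueness, I would observe that $L=\{x-t\nabla f(x):t\in\mathbb{R}\}$ is a line (because $\nabla f(x)\neq 0$) containing $x$, where $f$ attains the value $f(x)$. By Lemma \ref{uniqueness}, at most one other point of $L$ can share this value, so at most one real $t\neq 0$ satisfies $\varphi(t)=f(x)$; in particular the positive $t$ produced above is unique. I do not expect a serious obstacle in this argument; the one subtlety worth flagging is that the norm governing strong convexity need not coincide with the Euclidean norm used to define the gradient, but this only affects constants in the coercivity bound, not the conclusion that $\varphi$ diverges to $+\infty$.
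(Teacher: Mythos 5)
Your proposal is correct and follows essentially the same route as the paper's proof: show the one-dimensional restriction $g(t)=f(x-t\nabla f(x))$ decreases initially because $g'(0)=-\|\nabla f(x)\|^2<0$, use coercivity from strong convexity to get $g(t)\to+\infty$, apply the intermediate value theorem for existence, and invoke Lemma \ref{uniqueness} for uniqueness. Your version merely spells out the coercivity bound via the first-order strong-convexity inequality and the norm-equivalence caveat, which the paper leaves implicit.
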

\begin{proof}
Let $g(t)=f(x-t\nabla f(x))$.
Since $g'(0)<0$, there is
$t_0>0$ such that
$g(t_0)<g(0)$. Since $f$
is strongly convex on $\mathbb{R}^n$, it is continuous
on $\mathbb{R}^n$ and coercive, i.e., $\lim_{t \rightarrow +\infty} g(t)=+\infty$, implying
that there is $t$ satisfying
$t_0<t<+\infty$ such that
$g(t)=g(0)$ or equivalently such that $f(x-t\nabla f(x))=f(x)$. By Lemma \ref{uniqueness}, there cannot be more than one such $t>t_0$, which achieves the proof of the lemma.
\end{proof}

Extension of the method for strongly convex problems
involves writing conditions ME1, ME2, and ME3
and choosing among ellipses satisfying ME1, ME2, ME3
one whose center is, for instance, closest to $x^k$.
In this more general case, $y^k$ and step $t_k$
can be computed by dichotomy, based on Lemmas
\ref{uniqueness} and 
\ref{existence}. It would also be interesting to study the complexity
of ME.\\

\par {\textbf{Funding.}} Roger Behling was partially supported by Conselho Nacional de Desenvolvimento Científico e Tecnológico (CNPq) Grant 309458/2025-0 and Fundacao de Amparo a Pesquisa e Inovacao do Estado de Santa Catarina (FAPESC) Grant 2024TR002238. Vincent Guigues was partially supported by CNPq acknowledges the support of CNPq grant 305263/2023-4. \\

\par {\textbf{Competing interests.}} The authors have no competing interests to declare.

\end{document}